\begin{document}

\newtheorem{definition}{Definition}[section]
\newtheorem{lemma}{Lemma}[section]
\newtheorem{remark}{Remark}[section]
\newtheorem{theorem}{Theorem}[section]
\newtheorem{example}{Example}[section]
\newtheorem{corollary}{Corollary}[section]


\title{The role of slow manifolds in parameter estimation for a multiscale stochastic system with $\alpha$-stable L\'evy noise} 




\author{Ying Chao}
 \email{yingchao1993@hust.edu.cn}
 
\author{Pingyuan Wei}%
\altaffiliation[]{Corresponding author}
 \email{weipingyuan@hust.edu.cn}
\affiliation{School of Mathematics and Statistics \& Center for Mathematical Sciences \& Hubei Key Laboratory for Engineering Modeling and Scientific Computing, Huazhong
University of Science and Technology, Wuhan 430074,  China}

\author{Jinqiao Duan}
 \email{duan@iit.edu}
\affiliation{%
Department of Applied Mathematics, Illinois Institute of Technology, Chicago, IL 60616, United States of America}


\date{\today}

\begin{abstract}
This work is about parameter estimation for a fast-slow stochastic system with non-Gaussian $\alpha$-stable L\'evy noise. When the observations are only available for slow components, a system parameter is estimated and the accuracy for this estimation is quantified by $p$-moment with $p\in(1, \alpha)$, with the help of a reduced system through random slow manifold approximation. This method provides an advantage in computational complexity and cost, due to the dimension reduction in stochastic systems. To numerically illustrate this method, and to  corroborate that  the parameter estimator based on the reduced slow system is a good approximation for the true parameter value of the original system, a prototypical example is present.
\end{abstract}

\pacs{}

\maketitle 

\section{Introduction}
\noindent
Multi-scale stochastic dynamical systems are ubiquitous in engineering and science. For example, slow and fast surface dynamics often occur in an electrocatalytic oscillator in an interactive way \cite{NNEH}; dynamics of  gene regulatory networks \cite{KJB, TGS, WTRY} usually evolve on notably different time scales, due to the fact that the production process of mRNA is faster than the protein dynamics.  More specifically,  the production of mRNA and proteins occur in an unpredictable and intermittent manner. These burst behaviors further contribute to variation or noise in individual cells or cell-to-cell interactions, which have been confirmed by  a large number of observations from biological experiments. Such perturbations appear to be appropriate modeled by the non-Gaussian noise. In fact, non-Gaussian random influences  are widely observed in many complex nonlinear systems \cite{BSW, Dror, YD, ZCZ}. Thus, it is significant and desirable to investigate two-scale stochastic differential equations (SDEs) under non-Gaussian (in particular, L\'evy type) fluctuations.\par

To make progress in understanding these complex dynamics, it is of a great importance to have a suitable tool for the reduction of such systems and their models to only their slow components, which is often essential for
scientific computation and further analysis. The reduction method based on the random slow manifold is one of such effective tool \cite{Cdz, FJP, FLD}.

We consider a type of reduction method for the multi-stochastic dynamical systems through the random slow manifolds in this paper. The theory of the random slow manifolds could serve as an effective tool for qualitative analysis of dynamical behaviors, 
as slow manifolds are geometric invariant structures in state space to examine or simplify stochastic dynamics \cite{FLD,SS}. For fast-slow stochastic dynamical systems in the context of Gaussian random fluctuations, the random slow manifolds have been utilized to investigate effective filtering on a reduced slow system \cite{QZD}, provide an accurate estimate on system parameter \cite{RDW}, detect the stochastic bifurcation \cite{HCDL} and understand certain chemical reactions \cite{HN}. The study of the dynamics generated by SDEs with non-Gaussian L\'evy noise is still in its infancy, but some interesting works are emerging \cite{ZCZ, CWY}. Under appropriate conditions, Yuan et al \cite{YD} have obtained low dimensional reduction of fast-slow stochastic dynamical systems driven by $\alpha$-stable L\'evy noise via random slow manifolds.\par

Parameter estimation issue is an important part of the overall multi-scale modeling strategy in a wide variety of applications. In general, when stochastic models are used to describe some certain phenomena, it is important to identify the unknown parameters in this model. For example, it's of great interest to examine the change rate for low-risk bounds in financial markets \cite{Dror}. And we are often interested in parameter (see \cite{WTRY}), which represents the degradation or production rates of protein and mRNA.  Recently, Zhang et al \cite{ZCZ} have devised a parameter estimator for a multi-scale SDEs with L\'evy noise by using stochastic averaging principle. In this paper, we develop a different parameter estimation method for multi-scale diffusions with non-Gaussian noise, with the help of the random slow manifolds.\par

In the present paper, we consider a fast-slow stochastic dynamical system under $\alpha$-stable L\'evy noise, but the observations are only available for slow components. By focusing on the reduced slow system on the random slow manifold, we demonstrate that an unknown system parameter in the drift term of such systems could be estimated. And the accuracy for this estimation is quantified by $p$-moment with $p\in(1, \alpha)$. Instead of solving original stochastic systems, this estimation method offers a benefit of dimension reduction in quantifying parameters in stochastic dynamical system. Furthermore, we verify this method and search for the estimated parameter value numerically, with the help of the stochastic Nelder-Mead method for optimization \cite{Chang}. Finally, we make some remarks in section 5.\par

This paper is organized as follows. After recalling some facts about random slow manifold and its approximation in the context of L\'evy random fluctuations in the next section, we devise a parameter estimator in section 3, by utilizing only observations on the slow component. Moreover, we establish the accuracy for this parameter estimator in terms of observation error and slow reduction error. And then in section 4, we illustrate our estimation method numerically in a specific example. Finally, we give some discussions and comments in a more biological context.\par

\renewcommand{\theequation}{\thesection.\arabic{equation}}
\setcounter{equation}{0}

\section{Preliminaries}
\noindent In this section, we recall some facts about L\'evy motions, introduce the framework for our reduction method for parameter estimation and present some results on slow manifold and its approximation.\par

\subsection{L\'evy process \cite{Duan, Sato}}

\begin{definition}
A stochastic process $L_t$ is a L\'evy process if\par
(1) $L_0=0$ (a.s.);\par
(2) $L_t$ has independent increments and stationary increments; and \par
(3) $L_t$ has stochastically continuous sample paths, i.e. for every $s\geqslant0$, $L_t\to L_s$ in probability, as $t\to s$.
\end{definition}
\noindent We now consider a special but important class of L\'evy motions, the $\alpha$-stable l\'evy motions, which are defined as follows.
\begin{definition}
For $\alpha\in(0,2)$, an n-dimensional symmetric $\alpha$-stable process $L_t^{\alpha}$ is a L\'evy process with characteristic function
$$\mathds{E}[\exp(i\left< u, L_t^{\alpha} \right>]=\exp\{-C_1(n, \alpha)|u|^{\alpha}\}, \text{ for } u\in \mathbb{R}^{n}$$
with $C_1(n, \alpha): = \pi^{-\frac{1}{2}}\Gamma((1+\alpha)/2)\Gamma(n/2)/\Gamma((n+\alpha)/2)$.
\end{definition}
\noindent Here, $\Gamma$ is the Gamma function. A useful fact is that for $\alpha\in(0,2)$, $\mathds{E}[|L_t^{\alpha}|^{p}]$ is finite according as $p\in(0, \alpha)$. We will quantify accuracy of estimation in terms of $p$-moment with $p\in(1, \alpha)$ in the next section.

\subsection{Framework \cite{Arnold,RDW, YD}}
\noindent
We consider the parameter estimation on $\lambda$ in the parameter space $\Lambda$ which is a closed interval of $\mathbb{R}$ in the following multi-scale stochastic dynamical system
\begin{eqnarray}
\dot{x}&=&\frac{1}{\varepsilon}Ax+\frac{1}{\varepsilon}f(x,y)+\sigma\varepsilon^{-\frac{1}{\alpha}}\dot{L_t^\alpha},\;\;\;\; x(0)=x_0 \in \mathbb{R}^n, \label{Equation-f-main}\\
\dot{y}&=&By+g(x,y,\lambda),\;\;\;\; y(0)=y_0 \in \mathbb{R}^m \label{Equation-s-main}.
\end{eqnarray}

\noindent The parameter $\varepsilon\ll 1$ represents the ratio of the two time scales. Here A and B are matrices, f, g are nonlinear Lipschitz continuous functions with Lipschitz constant $L_f$ and $L_g$ respectively, $\sigma$ is the intensity of noise and $L_t^\alpha$ is a two-sided $\mathbb{R}^n$-valued symmetric $\alpha$-stable L\'evy process defined on a probability space $(\Omega,\mathcal{F}, \mathbb{P})$ with index of the stability $1 < \alpha < 2$; refer to \cite{Duan, Sato, YD}. We remark that if $f$ and $g$ are only locally Lipschitz, but the corresponding deterministic system has a bounded absorbing set, we could obtain a modified system with globally Lipschitz drift by conducting a cut-off of the original system. Throughout the paper, we make the following hypotheses: \par
\noindent ({\bf H1}) There exists positive constants $\beta$, $\gamma$ and $K$, such that for every $x\in\mathbb{R}^n$ and $y\in\mathbb{R}^m$, the following exponential estimates hold:
$$|e^{At}x|_{\mathbb{R}^n}\leq Ke^{-\gamma t}|x|_{\mathbb{R}^n},\;t\geq 0;\;\;\;|e^{Bt}y|_{\mathbb{R}^m}\leq Ke^{\beta t}|y|_{\mathbb{R}^m},\;t\leq 0.$$
\noindent ({\bf H2}) $\gamma>KL_f$.\par

Before using the low dimensional reduction of system (\ref{Equation-f-main})-(\ref{Equation-s-main}) to estimate parameter $\lambda$, we give some results on random slow manifold and its approximation. We will treat the slow manifold under a driving flow $(\Omega, \mathcal{F}, \mathds{P}, \theta)$. 
\begin{definition}
 Let $(\Omega, \mathcal{F}, \mathds{P})$ be a probability space. And  $\theta$=$\{\theta_t\}_{t\in \mathbb{R}}$ is a flow on $\Omega$  which is defined as a mapping

 $$
 \theta: \mathbb{R}\times \Omega\mapsto \Omega
 $$
satisfying
\par
$\bullet$ $\theta_0=id  (indentity)~~on~~ \Omega; $
\par
$\bullet$ $\theta_{t_1}\theta_{t_2}=\theta_{t_1+t_2}~~~~for~~all~~ t_1, t_2 \in \mathbb{R}; $
\par
$\bullet$ the mapping $(t,\omega)\mapsto \theta_t\omega$ is $(\mathcal{B}(\mathbb{R})\otimes\mathcal{F},
\mathcal{F})-measurable$, where $\mathcal{B}(\mathbb{R})$ is the collection of Borels sets on the real line $\mathbb{R}$.
\end{definition}
\par

Now, introduce an auxiliary system 
$$\dot{z^\varepsilon}=\frac{1}{\varepsilon}Az^\varepsilon+\varepsilon^{-\frac{1}{\alpha}}\dot{L_t^\alpha}.$$
So by \cite[Lemma 3.1] {YD}, there exists a random variable $\eta^\varepsilon(\omega)=\varepsilon^{-\frac{1}{\alpha}}\int_{-\infty}^{0}e^{-\frac{As}{\varepsilon}}dL_s^\alpha(\omega)$ such that $\eta^\varepsilon(\theta_t\omega)=\varepsilon^{-\frac{1}{\alpha}}\int_{-\infty}^{t}e^{\frac{A(t-s)}{\varepsilon}}dL_s^\alpha(\omega)$
solves the above equation, where $\theta_t$ is the driving flow defined by $L_s^\alpha(\theta_t\omega)=L_{t+s}^\alpha(\omega)-L_t^\alpha(\omega)$. Set a random transformation
\begin{equation}\label{Transformation}
\binom{\hat{x}}{\hat{y}}=T(\omega,x,y):=\binom{x-\sigma\eta^\varepsilon(\omega)}{y},
\end{equation}
and then $(\hat{x}(t),\hat{y}(t))=T(\theta_t\omega,x(t),y(t))$ satisfy the following system with random coefficients
\begin{eqnarray}
\dot{\hat{x}}(t)&=&\frac{1}{\varepsilon}A\hat{x}+\frac{1}{\varepsilon}f(\hat{x}(t)+\sigma\eta^\varepsilon(\theta_t\omega),\hat{y}(t)), \label{Equation-f-rds}\\
\dot{\hat{y}}(t)&=&B\hat{y}(t)+g(\hat{x}(t)+\sigma\eta^\varepsilon(\theta_t\omega),\hat{y}(t),\lambda). \label{Equation-s-rds}
\end{eqnarray}

\noindent The following Lemma comes from \cite[Theorem 4.3] {YD}.\par
\begin{lemma}{\bf (Random slow manifold).}\label{lemma 2.1}
Assume that $\varepsilon>0$ is sufficiently small and ({\bf H1})-({\bf H2}) hold. Then the fast-slow system (\ref{Equation-f-main})-(\ref{Equation-s-main})
has a c\`{a}dl\`{a}g  random slow manifold $\mathcal{M}^\varepsilon(\omega)=\{(h^\varepsilon(\zeta, \omega),\zeta)| \; \zeta\in\mathbb{R}^m\}$. Here, $h^\varepsilon(\zeta, \omega): \mathbb{R}^m\rightarrow\mathbb{R}^n$ is a random nonlinear mapping expressed by $h^\varepsilon(\zeta, \omega)=\sigma\eta^\varepsilon(\omega)+\hat{h}^\varepsilon(\zeta, \omega)$, with $\hat{h}^\varepsilon(\zeta, \omega)$ determined as follows,
 $$\hat{h}^\varepsilon(\zeta, \omega)=\frac{1}{\varepsilon}\int_{-\infty}^{0}e^{-\frac{As}{\varepsilon}}f(\hat{x}(s,\omega,\zeta)+\sigma\eta^\varepsilon(\theta_s\omega), \hat{y}(s,\omega,\zeta))ds,\;\;\zeta\in\mathbb{R}^m.$$
\end{lemma}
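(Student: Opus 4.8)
The plan is to construct the random slow manifold as a graph $\hat{x} = \hat{h}^\varepsilon(\zeta,\omega)$ over the slow variable by a fixed-point argument in a space of exponentially weighted functions, following the classical Lyapunov--Perron method adapted to the random setting of equations \eqref{Equation-f-rds}--\eqref{Equation-s-rds}. First I would pass to the transformed system \eqref{Equation-f-rds}--\eqref{Equation-s-rds}, which has the advantage that the noise now enters only through the coefficient $\sigma\eta^\varepsilon(\theta_t\omega)$ rather than as a stochastic differential; a trajectory $(\hat x(t),\hat y(t))$, $t\le 0$, that lies on the slow manifold and stays controlled as $t\to-\infty$ must satisfy the integral (Lyapunov--Perron) equation obtained by solving the $\hat x$-equation backward from $-\infty$ (using the stable estimate $|e^{At/\varepsilon}x|\le Ke^{-\gamma t/\varepsilon}|x|$ from ({\bf H1})) and the $\hat y$-equation forward from $0$ with $\hat y(0)=\zeta$. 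Setting $t=0$ in the first component then yields exactly the formula claimed for $\hat h^\varepsilon(\zeta,\omega)$.

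Next I would set up the Banach space $C_{\xi}^-$ of càdlàg functions $(\hat x,\hat y):(-\infty,0]\to\mathbb{R}^n\times\mathbb{R}^m$ with finite norm $\sup_{t\le 0} e^{-\xi t}(|\hat x(t)|+|\hat y(t)|)$ for a suitable exponential rate $\xi$ chosen strictly between $\beta$ and $\gamma/\varepsilon$ (this window is nonempty once $\varepsilon$ is small, and it is where ({\bf H2}) and the smallness of $\varepsilon$ are used). Define the map $\mathcal{J}$ sending a trajectory to the pair of integrals described above; using the Lipschitz bounds $L_f,L_g$ on $f,g$ together with the exponential dichotomy estimates, one checks that $\mathcal{J}$ maps $C_\xi^-$ into itself and is a contraction with constant governed by $KL_f/(\gamma/\varepsilon-\xi)$ and $KL_g/(\xi-\beta)$, both made $<1$ by taking $\varepsilon$ small. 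The unique fixed point $(\hat x(\cdot,\omega,\zeta),\hat y(\cdot,\omega,\zeta))$ depends measurably on $\omega$ (since $\eta^\varepsilon(\theta_\cdot\omega)$ does and the fixed point is a limit of iterates), and substituting it back gives $\hat h^\varepsilon(\zeta,\omega)$; adding $\sigma\eta^\varepsilon(\omega)$ recovers $h^\varepsilon$ and hence $\mathcal{M}^\varepsilon(\omega)$. Invariance of $\mathcal{M}^\varepsilon(\omega)$ under the random dynamical system (i.e. the cocycle maps $\mathcal{M}^\varepsilon(\omega)$ to $\mathcal{M}^\varepsilon(\theta_t\omega)$) follows from the translation structure of the Lyapunov--Perron integrals, and the càdlàg regularity is inherited from that of $L_t^\alpha$ and of $\eta^\varepsilon$.

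I expect the main obstacle to be verifying that the Lyapunov--Perron operator is well-defined and contracting \emph{uniformly in $\omega$} despite the random inhomogeneous term $\sigma\eta^\varepsilon(\theta_s\omega)$: one must control the backward integral $\frac{1}{\varepsilon}\int_{-\infty}^0 e^{-As/\varepsilon} f(\hat x(s)+\sigma\eta^\varepsilon(\theta_s\omega),\hat y(s))\,ds$, which requires knowing that $|\eta^\varepsilon(\theta_s\omega)|$ grows strictly sub-exponentially (indeed at most polynomially) in $|s|$ as $s\to-\infty$ — a tempered-ness property of the $\alpha$-stable driven Ornstein--Uhlenbeck random variable that must be extracted from Lemma~3.1 of \cite{YD}. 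Once temperedness is in hand, the exponential weight $e^{-\xi t}$ with $\xi>0$ absorbs this growth and the estimates proceed as in the Gaussian case. Since the statement cites \cite[Theorem 4.3]{YD} directly, the cleanest route is simply to invoke that theorem after checking our hypotheses ({\bf H1})--({\bf H2}) and smallness of $\varepsilon$ match its assumptions; the sketch above indicates what lies behind it.
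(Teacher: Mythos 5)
Your proposal is correct and takes essentially the same route as the paper: the paper offers no proof of Lemma~\ref{lemma 2.1} at all, but simply imports it as \cite[Theorem 4.3]{YD} under ({\bf H1})--({\bf H2}) and small $\varepsilon$, which is exactly the option you settle on in your final paragraph. Your Lyapunov--Perron sketch (weighted space of c\`adl\`ag trajectories on $(-\infty,0]$, contraction via the dichotomy constants, temperedness of $\eta^\varepsilon(\theta_t\omega)$, measurability from the iteration) is a faithful outline of what underlies that cited theorem, up to the usual bookkeeping of the exponential weight's sign relative to the paper's convention $|e^{Bt}y|\le Ke^{\beta t}|y|$ for $t\le 0$.
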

\noindent In fact, the graph of the random mapping $\hat{h}^\varepsilon(\zeta, \omega)$ is the random slow manifold for the random system (\ref{Equation-f-rds})-(\ref{Equation-s-rds}).

Next, through the slow manifold $\mathcal{M}^\varepsilon(\omega)$ and by the same deduction as \cite[Corollary 4.1] {YD}, we could get a reduction system on $\mathcal{M}^\varepsilon(\omega)$. \par
\begin{lemma}{\bf (Reduced system on the random slow manifold).}\label{lemma 2.2}
Assume that $\varepsilon>0$ is sufficiently small and ({\bf H1})-({\bf H2}) hold. Then through the slow manifold $\mathcal{M}^\varepsilon(\omega)$, the system (\ref{Equation-f-main})-(\ref{Equation-s-main}) can be reduced to a lower dimensional slow stochastic system
 \begin{equation}\label{slowsystem}
\dot{y}=By+g(\hat{h}^\varepsilon(y, \theta_t\omega)+\sigma\xi(\omega),y,\lambda).
\end{equation}
where $\xi(\omega)=\int_{-\infty}^{0}e^{-As}dL_s^\alpha(\omega)$, through which $\xi(\theta_t\omega)=\int_{-\infty}^{t}e^{A(t-s)}dL_s^\alpha(\omega)$ is the stationary solution of linear system $\dot{z}(t)=Az+\dot{L_t^\alpha}.$
\end{lemma}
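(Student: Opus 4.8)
\emph{Proof idea.} The plan is to combine the invariance of the random slow manifold $\mathcal{M}^\varepsilon(\omega)$ given by Lemma \ref{lemma 2.1} with a projection of the full dynamics onto it, following the line of argument of \cite[Corollary 4.1]{YD}. First I would use that, in the transformed coordinates \eqref{Transformation}, the graph of $\hat h^\varepsilon(\cdot,\omega)$ is invariant for the random system \eqref{Equation-f-rds}--\eqref{Equation-s-rds}: if the initial datum satisfies $\hat x(0)=\hat h^\varepsilon(\hat y(0),\omega)$, then $\hat x(t)=\hat h^\varepsilon(\hat y(t),\theta_t\omega)$ for every $t$, which is exactly the content of the Lyapunov--Perron construction behind Lemma \ref{lemma 2.1} together with the cocycle identity $\theta_{t_1}\theta_{t_2}=\theta_{t_1+t_2}$. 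Substituting this relation into the slow equation \eqref{Equation-s-rds} eliminates the fast variable and produces the closed equation
\begin{equation*}
\dot{\hat y}(t)=B\hat y(t)+g\bigl(\hat h^\varepsilon(\hat y(t),\theta_t\omega)+\sigma\eta^\varepsilon(\theta_t\omega),\hat y(t),\lambda\bigr).
\end{equation*}

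Next I would undo the random transformation \eqref{Transformation}. Since $T$ acts as the identity on the slow component, $\hat y(t)=y(t)$, so the displayed equation is already a closed equation for $y$ with $\theta_t\omega$-dependent coefficients. It remains to recognize the stationary driving term: a time substitution $s=\varepsilon r$ together with the self-similarity of the symmetric $\alpha$-stable process shows that $\eta^\varepsilon(\omega)=\varepsilon^{-1/\alpha}\int_{-\infty}^{0}e^{-As/\varepsilon}\,dL_s^\alpha(\omega)$ has the same law as $\xi(\omega)=\int_{-\infty}^{0}e^{-As}\,dL_s^\alpha(\omega)$, so that $\eta^\varepsilon(\theta_t\omega)$ may be replaced, at the level of the law, by the $\theta_t$-shift of the stationary solution $\xi$ of $\dot z=Az+\dot L_t^\alpha$; this is the identification used in \cite[Corollary 4.1]{YD}, and it turns the above equation into \eqref{slowsystem}. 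I would then check that the right-hand side of \eqref{slowsystem} is globally Lipschitz in $y$---using the Lipschitz constant $L_g$ of $g$ and the Lipschitz dependence of $\zeta\mapsto\hat h^\varepsilon(\zeta,\omega)$ supplied by Lemma \ref{lemma 2.1}---and c\`{a}dl\`{a}g in $t$, so that \eqref{slowsystem} is a well-posed random differential equation.

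The substitution and the noise identification are routine once Lemma \ref{lemma 2.1} is available; the substantive input is Lemma \ref{lemma 2.1} itself, i.e. the \emph{existence} of the invariant manifold, which already encodes ({\bf H1})--({\bf H2}) and the smallness of $\varepsilon$. The point I expect to require the most care is ensuring that \eqref{slowsystem} is a faithful reduction in the sense needed later---that solutions of \eqref{slowsystem} approximate the slow component of \eqref{Equation-f-main}--\eqref{Equation-s-main} for general, off-manifold initial data---which rests on the exponential tracking (asymptotic completeness) property of $\mathcal{M}^\varepsilon(\omega)$, with fast directions relaxing to the manifold at a rate of order $e^{-\gamma t/\varepsilon}$; I would quote this from \cite{YD} rather than reprove it, since the present lemma asserts only the \emph{form} of the reduced system. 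A minor technical point is the $\omega$-measurability of $\hat h^\varepsilon$ and of the stochastic integrals defining $\eta^\varepsilon$ and $\xi$, and the admissibility of the substitution $s=\varepsilon r$ at the level of the $\alpha$-stable stochastic integral; these follow from the standard approximation of such integrals and the stationary-increment structure built into the driving flow $\theta$.
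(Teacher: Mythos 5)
Your proposal is correct and follows essentially the same route as the paper: the paper obtains \eqref{slowsystem} exactly by the deduction of \cite[Corollary 4.1]{YD}---restricting the transformed system \eqref{Equation-f-rds}--\eqref{Equation-s-rds} to the invariant graph of $\hat h^\varepsilon$ from Lemma \ref{lemma 2.1} (the transformation being the identity on the slow component) and then identifying the driving term $\eta^\varepsilon(\theta_t\omega)$ with $\xi(\omega)$ in distribution via the scaling/self-similarity argument of \cite[Lemma 3.2]{YD}. Your additional remarks on well-posedness, exponential tracking for off-manifold data, and measurability are consistent with how the paper later uses \cite[Corollary 4.4]{YD}, so there is no substantive difference in approach.
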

\noindent Here, $\xi(\omega)$ and $\eta^\varepsilon(\theta_{t}\omega)$ are identically distributed due to $\eta^\varepsilon(\theta_{\varepsilon t}\omega)$ and $\eta^\varepsilon(\omega)$ are identically distributed with $\xi(\theta_t\omega)$ and $\xi(\omega)$ respectively in Lemma 3.2 by \cite{YD}, together with the fact that $\eta^\varepsilon(\theta_{t}\omega)$
is identically distributed with $\eta^\varepsilon(\omega)$.\par

Finally, by time scaling for the system (\ref{Equation-f-rds})-(\ref{Equation-s-rds}) and a singular perturbation method, we could get a small $\varepsilon$ approximation for $\hat{h}^\varepsilon$; see \cite{YD,Cdz,Omalley,RDC}.
\begin{lemma}{\bf (An approximate slow manifold).}\label{lemma 2.3}
Assume the hypotheses of Lemma \ref{lemma 2.2} to be valid. Then there exists an approximate random slow manifolds in distribution for the system (\ref{Equation-f-rds})-(\ref{Equation-s-rds}). More precisely, $\hat{h}^\varepsilon(\zeta, \omega)=\hat{h}^{(0)}(\zeta, \omega)+\varepsilon\hat{h}^{(1)}(\zeta, \omega)+\mathcal{O}(\varepsilon^2)$, where
\begin{equation}\label{h0}
\hat{h}^{(0)}(\zeta, \omega)=\int_{-\infty}^{0}e^{-As}f(\hat{x}^{(0)}(s)+\sigma\xi(\theta_s\omega),\zeta)ds,
\end{equation}
and
\begin{align}\label{h1}
\hat{h}^{(1)}(\zeta, \omega)=&\int_{-\infty}^{0}e^{-As}\Big\{f_y\big(\hat{x}^{(0)}(s)+\sigma\xi(\theta_s\omega),\zeta\big)\Big[Bs\zeta+\int_{0}^{s}g\big(\hat{x}^{(0)}(r)+\sigma\xi(\theta_r\omega),\zeta,\lambda\big)dr\Big]\notag\\
&+f_x\big(\hat{x}^{(0)}(s)+\sigma\xi(\theta_s\omega),\zeta\big)\hat{x}^{(1)}(s)\Big\}ds.
\end{align}

\noindent Here, $\hat{x}^{(0)}(t)$ and $\hat{x}^{(1)}(t)$ solve the following random differential equations, respectively
 \begin{equation} \label{x0}
  \begin{split}
  &d\hat{x}^{(0)}(t)=A\hat{x}^{(0)}(t)dt+f(\hat{x}^{(0)}(t)+\sigma\xi(\theta_t\omega),\zeta)dt,   \\
  &\hat{x}^{(0)}(0)=\hat{h}^{(0)}(0,\omega),
  \end{split}
\end{equation}
and
 \begin{equation} \label{x1}
  \begin{split}
  d\hat{x}^{(1)}(t)=&\big[A+f_x(\hat{x}^{(0)}(t)+\sigma\xi(\theta_t\omega),\zeta)\big]\hat{x}^{(1)}(t)dt\\
  &+f_y\big(\hat{x}^{(0)}(t)+\sigma\xi(\theta_t\omega),\zeta\big)\Big[Bt\zeta+\int_{0}^{t}g(\hat{x}^{(0)}(s)+\sigma\xi(\theta_s\omega),\zeta,\lambda)ds\Big]dt,   \\
  \hat{x}^{(1)}(0)=&\hat{h}^{(1)}(0,\omega).
  \end{split}
\end{equation}
\end{lemma}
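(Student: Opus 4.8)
The plan is to convert the singular perturbation problem for \eqref{Equation-f-rds}--\eqref{Equation-s-rds} into a regular one by rescaling time, and then to expand the trajectory lying on $\mathcal{M}^\varepsilon(\omega)$ in powers of $\varepsilon$, matching orders. First I would set $s=\varepsilon\tau$ in the integral representation of $\hat h^\varepsilon$ from Lemma \ref{lemma 2.1}, which kills the $1/\varepsilon$ prefactor and gives $\hat h^\varepsilon(\zeta,\omega)=\int_{-\infty}^{0}e^{-A\tau}f\big(\hat x(\varepsilon\tau,\omega,\zeta)+\sigma\eta^\varepsilon(\theta_{\varepsilon\tau}\omega),\hat y(\varepsilon\tau,\omega,\zeta)\big)\,d\tau$. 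Writing $X^\varepsilon(\tau):=\hat x(\varepsilon\tau,\omega,\zeta)$ and $Y^\varepsilon(\tau):=\hat y(\varepsilon\tau,\omega,\zeta)$, the system \eqref{Equation-f-rds}--\eqref{Equation-s-rds} turns into $\dot X^\varepsilon=AX^\varepsilon+f(X^\varepsilon+\sigma\eta^\varepsilon(\theta_{\varepsilon\tau}\omega),Y^\varepsilon)$ and $\dot Y^\varepsilon=\varepsilon\big[BY^\varepsilon+g(X^\varepsilon+\sigma\eta^\varepsilon(\theta_{\varepsilon\tau}\omega),Y^\varepsilon,\lambda)\big]$, with $X^\varepsilon(0)=\hat h^\varepsilon(\zeta,\omega)$ and $Y^\varepsilon(0)=\zeta$. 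Since $\eta^\varepsilon(\theta_{\varepsilon\tau}\omega)$ and $\xi(\theta_\tau\omega)$ are identically distributed (Lemma~3.2 of \cite{YD}), I would replace the driver by $\xi(\theta_\tau\omega)$ and argue in distribution; the slow variable is now an $\mathcal{O}(\varepsilon)$ perturbation, so the manifold problem for the rescaled system is regularly perturbed.

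Next, I would insert the \emph{ansatz} $X^\varepsilon=\hat x^{(0)}+\varepsilon\hat x^{(1)}+\mathcal{O}(\varepsilon^2)$ and $Y^\varepsilon=\zeta+\varepsilon\hat y^{(1)}+\mathcal{O}(\varepsilon^2)$, Taylor-expand $f$ and $g$ about the leading-order point $\big(\hat x^{(0)}+\sigma\xi(\theta_\tau\omega),\zeta\big)$, and equate coefficients of powers of $\varepsilon$. The $\varepsilon^0$ part of the $Y$-equation forces $\hat y^{(0)}\equiv\zeta$; the $\varepsilon^0$ part of the $X$-equation (taking the stationary, bounded-as-$\tau\to-\infty$ solution) is exactly \eqref{x0}; the $\varepsilon^1$ part of the $Y$-equation integrates, with $\hat y^{(1)}(0)=0$, to $\hat y^{(1)}(\tau)=B\tau\zeta+\int_0^{\tau}g\big(\hat x^{(0)}(r)+\sigma\xi(\theta_r\omega),\zeta,\lambda\big)\,dr$; and the $\varepsilon^1$ part of the $X$-equation, after substituting this $\hat y^{(1)}$, is exactly \eqref{x1}. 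The initial conditions for $\hat x^{(0)},\hat x^{(1)}$ are then fixed self-consistently by matching $X^\varepsilon(0)=\hat h^\varepsilon(\zeta,\omega)$ order by order. Finally, plugging the expansion back into $\hat h^\varepsilon(\zeta,\omega)=\int_{-\infty}^{0}e^{-A\tau}f\big(X^\varepsilon(\tau)+\sigma\xi(\theta_\tau\omega),Y^\varepsilon(\tau)\big)\,d\tau$ and Taylor-expanding $f$ once more, the $\varepsilon^0$ term is \eqref{h0} and the $\varepsilon^1$ term is \eqref{h1}.

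The remaining, and genuinely delicate, step is the rigorous justification of the $\mathcal{O}(\varepsilon^2)$ remainder. All the improper integrals over $(-\infty,0]$ converge because hypothesis \textbf{(H1)} gives $|e^{-A\tau}|\le Ke^{\gamma\tau}$ for $\tau\le0$, and this exponential weight has to dominate both the at-most-linear-in-$\tau$ growth coming from the $B\tau\zeta$ and $\int_0^{\tau}g\,dr$ terms and the growth of $\tau\mapsto\xi(\theta_\tau\omega)$, so I would first record the temperedness of the stationary solution $\xi$. Then, on compact $\tau$-intervals, the global Lipschitz (and, where the expansion needs it, $C^1$/$C^2$) bounds on $f,g$ together with a Gronwall estimate for the rescaled system bound $X^\varepsilon-\hat x^{(0)}-\varepsilon\hat x^{(1)}$ and $Y^\varepsilon-\zeta-\varepsilon\hat y^{(1)}$ by $C\varepsilon^2$, and the weight $e^{\gamma\tau}$ propagates this into the estimate for $\hat h^\varepsilon-\hat h^{(0)}-\varepsilon\hat h^{(1)}$. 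The hard part is that the $\alpha$-stable stationary solution $\xi$ has only finite $p$-moments for $p\in(1,\alpha)$, so these bounds --- and hence the validity of the whole expansion --- can only be stated in distribution / in the $p$-th mean, not in mean square; this is carried out by the same singular-perturbation technique as in \cite{YD,Cdz,Omalley,RDC}.
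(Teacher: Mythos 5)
Your proposal is correct and follows essentially the same route the paper indicates: the paper gives no detailed proof of Lemma \ref{lemma 2.3}, but points to the time-rescaling plus singular-perturbation expansion of \cite{YD,Cdz,Omalley,RDC}, which is exactly what you carry out (rescale $s=\varepsilon\tau$, pass to $\xi(\theta_\tau\omega)$ in distribution via Lemma 3.2 of \cite{YD}, expand in powers of $\varepsilon$, and control the $\mathcal{O}(\varepsilon^2)$ remainder with the exponential weight from (\textbf{H1}), Gronwall, and $p$-moment bounds with $p\in(1,\alpha)$). Your order-by-order matching reproduces \eqref{x0}, \eqref{x1}, \eqref{h0}, \eqref{h1} as stated, so no gap.
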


\noindent By combining Lemma \ref{lemma 2.2} and Lemma \ref{lemma 2.3}, thus we have an approximated slow system
\begin{equation}\label{aslowsystem}
\dot{y}=By+g\big(\tilde{h}^\varepsilon(y, \theta_t\omega)+\sigma\xi(\omega),y,\lambda\big),
\end{equation}
where $\tilde{h}^\varepsilon(y, \omega)=\hat{h}^{(0)}(y, \omega)+\varepsilon\hat{h}^{(1)}(y, \omega)$ is a first order approximation of $\hat{h}^\varepsilon(y, \omega)$.
This reduced system can be used to estimate parameter $\lambda$ in the next section.

\renewcommand{\theequation}{\thesection.\arabic{equation}}
\setcounter{equation}{0}

\section{Parameter estimation based on a random slow manifold}
 
\noindent In this section, we will estimate the unknown parameter $\lambda$ in the system (\ref{Equation-f-main})-(\ref{Equation-s-main}) based on the reduced slow system (\ref{slowsystem}) or (\ref{aslowsystem}). As we know, it is possible to make a good estimation of $\lambda$, when observations are available for both components $x$ and $y$ (see \cite{B, YangD}). In practice, it is often more feasible to observe slow variables than fast variables. So it's necessary to develop a parameter method by utilizing the observations on slow component $y$ only, which further reduces the computational complexity. \par

For the convenience of presentation, we introduce some notations. Denote the observation of the original slow-fast system (\ref{Equation-f-main})-(\ref{Equation-s-main}) with actual system parameter value $\lambda_0$ by $(x_{\lambda_0}^{ob}(t), y_{\lambda_0}^{ob}(t))$,  $t \in [0, T]$, and
the observation of the slow system (\ref{slowsystem}) with parameter $\lambda$ and initial value $y_0$ by $y_{\lambda}^S(t)$.
Define the objective function $F(\lambda)=\mathds{E}\int_{0}^{T}|y_{\lambda}^S(t)-y_{\lambda_0}^{ob}(t)|_{\mathbb{R}^m}^pdt$ with $p\in(1, \alpha)$ and assume that there is a unique minimizer $\lambda_E\in \Lambda$ such that $F(\lambda_E)=\min_{\lambda\in \Lambda}F(\lambda)$. In fact, this $\lambda_E$ is our parameter estimator. Besides, we further suppose that $g(x,y,\lambda)$ is Lipschitz continuous with respect to $x,y$ and $\lambda$ with Lipschitz constant $L_g$ and $\nabla_\lambda g(x,y,\lambda)$ is also a continuous function of $x,y$ and $\lambda$. The following theorem can provide an error estimation for this parameter estimation method.\par

\begin{theorem}\label{theorem 3.1} Set
$H(\lambda_0,\lambda_E):=\mathds{E}|\int_{0}^{t^\ast}e^{-Bt}\nabla_\lambda g(\hat{h}^\varepsilon(y_{\lambda'}^S(t), \theta_t\omega)+\sigma\xi(\omega),y_{\lambda'}^S(t),\lambda')dt|_{\mathbb{R}^m}$, where $\lambda'=\lambda_0+\kappa(\lambda_E-\lambda_0)$ with $\kappa\in(0,1)$ and choose $t^\ast\in [0, T]$ such that $\int_{0}^{T}|y_{\lambda_E}^S(t)-y_{\lambda_0}^{ob}(t)|_{\mathbb{R}^m}dt\geq T|y_{\lambda_E}^S(t^\ast)-y_{\lambda_0}^{ob}(t^\ast)|_{\mathbb{R}^m}$. Assume that $\varepsilon>0$ is sufficiently small. If $H(\lambda_0,\lambda_E)>0$ for $\lambda_E\in \Lambda$, then we obtain an error estimation for $\lambda_E$:
\begin{align}\label{equation-5}
|\lambda_0-\lambda_E|<&\frac{1}{H(\lambda_0,\bar{\lambda})}C^{\frac{1}{p}}(KT^{-\frac{1}{p}}+KL_gT^{\frac{p-1}{p}}+\frac{K^2L_gL_f}{\gamma-KL_f}T^{\frac{p-1}{p}})\notag\\
&\cdot\Big[\frac{C_1\varepsilon}{cp}\big(\mathds{E}\big|x_0-\sigma\xi(\omega)-\hat{h}^\varepsilon(y_0,\omega)\big|_{\mathbb{R}^n}^p\big)^{\frac{1}{p}}+F(\lambda_E)^{\frac{1}{p}}\Big],\notag
\end{align}
i.e. $|\lambda_0-\lambda_E|$ can be controlled by observation error $\mathcal{O}((F(\lambda_E))^{\frac{1}{p}})$ and the error due to slow reduction $\mathcal{O}(\varepsilon)$.
Here, $\bar{\lambda}\in \Lambda$ satisfies $H(\lambda_0,\bar{\lambda})=\max_{\lambda_E\in \Lambda}H(\lambda_0,\lambda_E)$ and $C$, $C_1$, $c$ are positive constants.\par
\end{theorem}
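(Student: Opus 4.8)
The plan is to control $|\lambda_0-\lambda_E|$ by isolating the increment $\lambda_E-\lambda_0$ inside the difference of two reduced slow trajectories and then splitting that difference into an \emph{observation} part, governed by $F(\lambda_E)$, and a \emph{slow-reduction} part, governed by $\varepsilon$ and the initial distance to the random slow manifold. The starting point is the variation-of-constants form of the reduced system \eqref{slowsystem}, multiplied through by $e^{-Bt}$,
\[
e^{-Bt}y_\lambda^S(t)-y_0=\int_0^t e^{-Bs}\,g\big(\hat h^\varepsilon(y_\lambda^S(s),\theta_s\omega)+\sigma\xi(\omega),y_\lambda^S(s),\lambda\big)\,ds ,
\]
the point of the factor $e^{-Bt}$ being that (\textbf{H1}) gives $|e^{-Bs}|_{\mathbb R^m}\le Ke^{-\beta s}\le K$ for $s\ge0$, whereas $e^{Bs}$ for $s\ge0$ is not controlled by the hypotheses; this is why $e^{-Bt}$ sits inside $H(\lambda_0,\lambda_E)$ and why no exponential-in-$T$ factor appears in \eqref{equation-5}.

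Subtracting this identity for $\lambda=\lambda_E$ and $\lambda=\lambda_0$, applying the mean value theorem to $g$ in its last argument, and using the joint Lipschitz continuity of $g$ (constant $L_g$) together with the Lipschitz bound $|\hat h^\varepsilon(\zeta_1,\omega)-\hat h^\varepsilon(\zeta_2,\omega)|_{\mathbb R^n}\le\tfrac{KL_f}{\gamma-KL_f}|\zeta_1-\zeta_2|_{\mathbb R^m}$ for the slow-manifold graph from Lemma \ref{lemma 2.1} (finite by (\textbf{H2})), one gets
\[
e^{-Bt^\ast}\big(y^S_{\lambda_E}(t^\ast)-y^S_{\lambda_0}(t^\ast)\big)=(\lambda_E-\lambda_0)\!\int_0^{t^\ast}\! e^{-Bt}\,\nabla_\lambda g\big(\hat h^\varepsilon(y^S_{\lambda'}(t),\theta_t\omega)+\sigma\xi(\omega),y^S_{\lambda'}(t),\lambda'\big)\,dt+\mathcal R,
\]
with $\lambda'=\lambda_0+\kappa(\lambda_E-\lambda_0)$ and $|\mathcal R|_{\mathbb R^m}\le K\big(L_g+\tfrac{KL_gL_f}{\gamma-KL_f}\big)\int_0^T|y^S_{\lambda_E}(s)-y^S_{\lambda_0}(s)|_{\mathbb R^m}\,ds$, which produces the factor $KL_g+\tfrac{K^2L_gL_f}{\gamma-KL_f}$ in \eqref{equation-5}.

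Next I would fix $t^\ast$ as in the statement — a point where $t\mapsto|y^S_{\lambda_E}(t)-y^{ob}_{\lambda_0}(t)|_{\mathbb R^m}$ does not exceed its average over $[0,T]$, which exists by the intermediate value property — solve the last display for $|\lambda_E-\lambda_0|$ times the scalar $\big|\int_0^{t^\ast}e^{-Bt}\nabla_\lambda g(\cdots,\lambda')\,dt\big|_{\mathbb R^m}$, and insert the triangle inequalities $|y^S_{\lambda_E}-y^S_{\lambda_0}|\le|y^S_{\lambda_E}-y^{ob}_{\lambda_0}|+|y^{ob}_{\lambda_0}-y^S_{\lambda_0}|$ both for the boundary value at $t^\ast$ (using also $|y^S_{\lambda_E}(t^\ast)-y^{ob}_{\lambda_0}(t^\ast)|_{\mathbb R^m}\le\frac1T\int_0^T|y^S_{\lambda_E}-y^{ob}_{\lambda_0}|$, which manufactures the $T^{-1/p}$) and under the time integral. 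Taking expectations and applying H\"older in time ($\int_0^T|u|\le T^{(p-1)/p}(\int_0^T|u|^p)^{1/p}$, where $p>1$ is used) followed by Jensen in $\omega$, the terms measured against $y^{ob}_{\lambda_0}$ assemble into $C^{1/p}\big(KT^{-1/p}+KL_gT^{(p-1)/p}+\tfrac{K^2L_gL_f}{\gamma-KL_f}T^{(p-1)/p}\big)F(\lambda_E)^{1/p}$, the constant $C$ collecting the factors from $|a+b|^p\le2^{p-1}(|a|^p+|b|^p)$ and the $p$-th moment bound for $L_t^\alpha$.

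The remaining — and, I expect, genuinely harder — piece is the \emph{slow-reduction error}: bounding $\mathds{E}|y^{ob}_{\lambda_0}(t^\ast)-y^S_{\lambda_0}(t^\ast)|_{\mathbb R^m}$ and $\mathds{E}\int_0^T|y^{ob}_{\lambda_0}(t)-y^S_{\lambda_0}(t)|_{\mathbb R^m}\,dt$ by $\mathcal O\!\big(\varepsilon\,(\mathds{E}|x_0-\sigma\xi(\omega)-\hat h^\varepsilon(y_0,\omega)|_{\mathbb R^n}^p)^{1/p}\big)$. The route is to pass through the random transformation \eqref{Transformation}, so that the observed fast variable is $\hat x(t)=x^{ob}_{\lambda_0}(t)-\sigma\eta^\varepsilon(\theta_t\omega)$ solving \eqref{Equation-f-rds}, compare it with its on-manifold value $\hat h^\varepsilon(\hat y(t),\theta_t\omega)$, and use the exponential attraction/tracking of $\mathcal M^\varepsilon(\omega)$ underlying Lemma \ref{lemma 2.1}: the deviation from the manifold relaxes on the fast $\varepsilon$-scale, so integrating it in time contributes a factor $\mathcal O(\varepsilon)$ (the mechanism being that $\int_0^\infty e^{-\gamma t/\varepsilon}\,dt=\varepsilon/\gamma$), with the initial deviation equal in law — via the time-scaling and the distributional identities between $\eta^\varepsilon$ and $\xi$ recorded after Lemma \ref{lemma 2.2} — to $x_0-\sigma\xi(\omega)-\hat h^\varepsilon(y_0,\omega)$. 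Feeding this into a Lipschitz/Gr\"onwall estimate for the $y$-component (again using $|e^{-Bs}|_{\mathbb R^m}\le K$ and the Lipschitz continuity of $g$) and controlling the $\alpha$-stable stochastic integrals through their finite $p$-moments (legitimate since $p\in(1,\alpha)$, so $\mathds{E}|L_t^\alpha|^p<\infty$) produces the bound, with $\varepsilon$ taken small enough that the higher-order terms of the manifold expansion are absorbed. Collecting the observation and reduction contributions, absorbing numerical factors into $C,C_1,c$, and dividing by $H(\lambda_0,\bar\lambda)>0$ (with $\bar\lambda$ maximizing $H(\lambda_0,\cdot)$) yields \eqref{equation-5}; the minimality of $\lambda_E$ plays no role in the estimate itself, only in making $\lambda_E$ well defined.
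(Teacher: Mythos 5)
Your proposal is correct in substance and shares the paper's skeleton: variation of constants for the difference of two reduced slow trajectories, the mean value theorem in $\lambda$ to extract $(\lambda_E-\lambda_0)$ against the $H$-integrand, the use of $|e^{-Bt}|\le K$ from ({\bf H1}), H\"older in time to produce $T^{-1/p}$ and $T^{(p-1)/p}$, and a split into an observation part $F(\lambda_E)^{1/p}$ plus an $\mathcal{O}(\varepsilon)$ slow-reduction part, finally dividing by $H(\lambda_0,\bar\lambda)>0$. Where you genuinely diverge is in how the two error sources are separated and how the manifold term is handled. The paper never compares $y^S_{\lambda_0}$ with $y^{ob}_{\lambda_0}$ pathwise inside your remainder: it first bounds everything by $\big(\mathds{E}\int_0^T|y^S_{\lambda_0}-y^S_{\lambda_E}|^p\,dt\big)^{1/p}$ (applying the time-averaging property of $t^\ast$ to $|y^S_{\lambda_0}-y^S_{\lambda_E}|$ itself), then splits at the moment level into $C(F(\lambda_0)+F(\lambda_E))$ and bounds $F(\lambda_0)$ directly by the cited exponential-tracking estimate $|y^S_{\lambda_0}(t)-y^{ob}_{\lambda_0}(t)|\le C_1e^{-ct/\varepsilon}|x_0-\sigma\xi(\omega)-\hat h^\varepsilon(y_0,\omega)|$ integrated in time, which is where the $\varepsilon$ appears. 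Likewise, for the term $|\hat h^\varepsilon(y^S_{\lambda_0}(t),\theta_t\omega)-\hat h^\varepsilon(y^S_{\lambda_E}(t),\theta_t\omega)|$ the paper does not invoke a pointwise Lipschitz constant of the manifold map; it re-derives the integrated bound with factor $\frac{KL_f}{\gamma-KL_f}$ by writing both on-manifold orbits as solutions of the fast equation, applying variation of constants, Gronwall, and an exchange of the order of integration. Your route (pointwise Lipschitz bound for $\hat h^\varepsilon$, pathwise triangle inequality against $y^{ob}_{\lambda_0}$) is viable and arguably cleaner, but note two costs: the Lipschitz constant $\frac{KL_f}{\gamma-KL_f}$ for $\hat h^\varepsilon$ is not stated in Lemma \ref{lemma 2.1} as given and must be imported from the construction in the reference, and, more importantly, your boundary term $\mathds{E}|y^{ob}_{\lambda_0}(t^\ast)-y^S_{\lambda_0}(t^\ast)|$ cannot be made $\mathcal{O}(\varepsilon)$ from the bare $C_1e^{-ct/\varepsilon}$ tracking estimate, since $t^\ast$ (chosen by the averaging property for $|y^S_{\lambda_E}-y^{ob}_{\lambda_0}|$, which vanishes at $t=0$) may be arbitrarily small; you must use that $y^{ob}_{\lambda_0}$ and $y^S_{\lambda_0}$ share the initial value $y_0$ and run the Gr\"onwall argument you sketch, so that the difference is $\mathcal{O}(\varepsilon)$ uniformly on $[0,T]$ because the off-manifold deviation enters only through $\int_0^t e^{-cs/\varepsilon}\,ds\le \varepsilon/c$. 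With that step made explicit (and accepting that the precise constant combination then differs slightly from the paper's), your argument closes; the paper's moment-level split via $F(\lambda_0)$ avoids this boundary issue at the price of the factor $C$ from $|a+b|^p\le 2^{p-1}(|a|^p+|b|^p)$.
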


\begin{proof}
For $p\in (1,\alpha)$,
\begin{equation}
\mathds{E}|y_{\lambda_0}^S(t)-y_{\lambda_E}^{S}(t)|_{\mathbb{R}^m}^p\leq C(\mathds{E}|y_{\lambda_0}^{S}(t)-y_{\lambda_0}^{ob}(t)|_{\mathbb{R}^m}^p+\mathds{E}|y_{\lambda_E}^{S}(t)-y_{\lambda_0}^{ob}(t)|_{\mathbb{R}^m}^p)
\end{equation}
\noindent holds for some positive constant $C$. By integrating both sides with respect to time and Fubini's theorem, we get
\begin{equation}\label{pf2}
\int_{0}^{T}\mathds{E}|y_{\lambda_0}^S(t)-y_{\lambda_E}^{S}(t)|_{\mathbb{R}^m}^pdt\leq C(F(\lambda_0)+F(\lambda_E)),
\end{equation}
where $F(\lambda)=\mathds{E}\int_{0}^{T}|y_{\lambda}^S(t)-y_{\lambda_0}^{ob}(t)|_{\mathbb{R}^m}^pdt$ by definition.

\noindent According to \cite[Corollary 4.4] {YD}, there exist positive constants $C_1$ and $c$ such that for every $t\geq0$ and a.s. $\omega\in\Omega$,
$|y_{\lambda_0}^{S}(t)-y_{\lambda_0}^{ob}(t)|_{\mathbb{R}^m}\leq C_1e^{-\frac{ct}{\varepsilon}}|x_0-\sigma\xi(\omega)-\hat{h}^\varepsilon(y_0,\omega)|_{\mathbb{R}^n}$. Thus we have
\begin{equation}\label{pf3}
F(\lambda_0)\leq\frac{ C_1^p\varepsilon}{cp}\mathds{E}|x_0-\sigma\xi(\omega)-\hat{h}^\varepsilon(y_0,\omega)|_{\mathbb{R}^n}^p.
\end{equation}
\noindent Now we calculate the difference between $y_{\lambda_0}^{ob}(t)$ and $y_{\lambda_E}^S(t)$ to obtain

\begin{equation}
  \begin{split}
  \dot{y}_{\lambda_0}^S(t)-\dot{y}_{\lambda_E}^S(t)=&B(y_{\lambda_0}^S(t)-y_{\lambda_E}^S(t))+ \big[g(\hat{h}^\varepsilon(y_{\lambda_0}^S(t), \theta_t\omega)+\sigma\xi(\omega),y_{\lambda_0}^S(t),\lambda_0)\notag\\
  &-g(\hat{h}^\varepsilon(y_{\lambda_E}^S(t), \theta_t\omega)+\sigma\xi(\omega),y_{\lambda_E}^S(t),\lambda_E)\big],    \\
  y_{\lambda_0}^S(0)-y_{\lambda_E}^S(0)=&0.\notag
  \end{split}
\end{equation}

\noindent By the variation of constants formula, we have
\begin{align}
&e^{-Bt^\ast}[y_{\lambda_0}^S(t^\ast)-y_{\lambda_E}^S(t^\ast)]\notag\\
=&\int_{0}^{t^\ast}e^{-Bt}\big[g(\hat{h}^\varepsilon(y_{\lambda_0}^S(t), \theta_t\omega)+\sigma\xi(\omega),y_{\lambda_0}^S(t),\lambda_0)-g(\hat{h}^\varepsilon(y_{\lambda_E}^S(t), \theta_t\omega)+\sigma\xi(\omega),y_{\lambda_E}^S(t),\lambda_0)\big]dt
\notag\\
&+\int_{0}^{t^\ast}e^{-Bt}\big[g(\hat{h}^\varepsilon(y_{\lambda_E}^S(t), \theta_t\omega)+\sigma\xi(\omega),y_{\lambda_E}^S(t),\lambda_0)-g(\hat{h}^\varepsilon(y_{\lambda_E}^S(t), \theta_t\omega)+\sigma\xi(\omega),y_{\lambda_E}^S(t),\lambda_E)\big]dt,\notag
\end{align}
and then
\begin{align}\label{equation-1}
&\int_{0}^{t^\ast}e^{-Bt}[g(\hat{h}^\varepsilon(y_{\lambda_E}^S(t), \theta_t\omega)+\sigma\xi(\omega),y_{\lambda_E}^S(t),\lambda_0)-g(\hat{h}^\varepsilon(y_{\lambda_E}^S(t), \theta_t\omega)+\sigma\xi(\omega),y_{\lambda_E}^S(t),\lambda_E)]dt\notag\\
=&-\int_{0}^{t^\ast}e^{-Bt}[g(\hat{h}^\varepsilon(y_{\lambda_0}^S(t), \theta_t\omega)+\sigma\xi(\omega),y_{\lambda_0}^S(t),\lambda_0)-g(\hat{h}^\varepsilon(y_{\lambda_E}^S(t), \theta_t\omega)+\sigma\xi(\omega),y_{\lambda_E}^S(t),\lambda_0)]dt\notag\\
&+e^{-Bt^\ast}[y_{\lambda_0}^S(t^\ast)-y_{\lambda_E}^S(t^\ast)].\notag\\
\end{align}

Furthermore, via taking norm on two sides of (\ref{equation-1}) and using the mean value theorem, it holds that
\begin{align}\label{equation-2}
&|\lambda_0-\lambda_E|\cdot\Big|\int_{0}^{t^\ast}e^{-Bt}\nabla_\lambda g(\hat{h}^\varepsilon(y_{\lambda'}^S(t), \theta_t\omega)+\sigma\xi(\omega),y_{\lambda'}^S(t),\lambda')dt\Big|_{\mathbb{R}^m}\notag\\
\leq & \int_{0}^{t^\ast}Ke^{-\beta t}L_g\big[|\hat{h}^\varepsilon(y_{\lambda_0}^S(t), \theta_t\omega)-\hat{h}^\varepsilon(y_{\lambda_E}^S(t), \theta_t\omega)|_{\mathbb{R}^n}+|y_{\lambda_0}^{S}(t)-y_{\lambda_E}^S(t)|_{\mathbb{R}^m}\big]dt\notag\\
&+Ke^{-\beta t^\ast}|y_{\lambda_0}^{S}(t^\ast)-y_{\lambda_E}^S(t^\ast)|_{\mathbb{R}^m}\notag\\
<& KL_g\int_{0}^{T}|\hat{h}^\varepsilon(y_{\lambda_0}^S(t), \theta_t\omega)-\hat{h}^\varepsilon(y_{\lambda_E}^S(t), \theta_t\omega)|_{\mathbb{R}^n}dt+KL_g(T^{p-1}\int_{0}^{T}|y_{\lambda_0}^{S}(t)-y_{\lambda_E}^S(t)|_{\mathbb{R}^m}^pdt)^{\frac{1}{p}}\notag\\
&+K|y_{\lambda_0}^{S}(t^\ast)-y_{\lambda_E}^S(t^\ast)|_{\mathbb{R}^m},
\end{align}

\noindent where $\lambda'=\lambda_0+\kappa(\lambda_E-\lambda_0)$ with $\kappa\in(0,1)$ and the last step is based on exponential estimation property, Lipschitz continuity of $g$, together with H\"{o}lder inequality.\par
Note that $t^\ast$ satisfies $\int_{0}^{T}|y_{\lambda_0}^{S}(t)-y_{\lambda_E}^S(t)|_{\mathbb{R}^m}dt\geq T|y_{\lambda_0}^{S}(t^\ast)-y_{\lambda_E}^S(t^\ast)|_{\mathbb{R}^m}$,
thus by H\"{o}lder inequality

\begin{equation}\label{inequality1}
|y_{\lambda_0}^{S}(t^\ast)-y_{\lambda_E}^S(t^\ast)|_{\mathbb{R}^m}\leq\frac{1}{T}\int_{0}^{T}|y_{\lambda_0}^{S}(t)-y_{\lambda_E}^S(t)|_{\mathbb{R}^m}dt\leq (\frac{1}{T}\int_{0}^{T}|y_{\lambda_0}^{S}(t)-y_{\lambda_E}^S(t)|_{\mathbb{R}^m}^pdt)^{\frac{1}{p}}.
\end{equation}

As mentioned in Section 2, $(\hat{h}^\varepsilon(y_{\lambda_0}^S(t), \theta_t\omega), y_{\lambda_0}^S(t))$ and $(\hat{h}^\varepsilon(y_{\lambda_E}^S(t), \theta_t\omega), y_{\lambda_E}^S(t))$ satisfy the random system (\ref{Equation-f-rds})-(\ref{Equation-s-rds}) with parameter $\lambda_0$, $\lambda_E$ and same initial value $(\hat{h}^\varepsilon(y_0, \omega), y_0)$ respectively, due to the definition of slow manifolds \cite{YD}. Thus, the first term in (\ref{equation-2}) can be handled as follows:
via the variation of constants formula, from
\begin{align}
&\frac{d}{dt}[\hat{h}^\varepsilon(y_{\lambda_0}^S(t), \theta_t\omega)-\hat{h}^\varepsilon(y_{\lambda_E}^S(t), \theta_t\omega)]\notag\\
=&\frac{1}{\varepsilon}\big[f(\hat{h}^\varepsilon(y_{\lambda_0}^S(t), \theta_t\omega)+\sigma\xi(\omega),y_{\lambda_0}^S(t))-f(\hat{h}^\varepsilon(y_{\lambda_E}^S(t), \theta_t\omega)+\sigma\xi(\omega),y_{\lambda_E}^S(t))\big]\notag\\
&+\frac{A}{\varepsilon}[\hat{h}^\varepsilon(y_{\lambda_0}^S(t), \theta_t\omega)-\hat{h}^\varepsilon(y_{\lambda_E}^S(t), \theta_t\omega)]\notag
\end{align}
we obtain
\begin{align}
&\big|\hat{h}^\varepsilon(y_{\lambda_0}^S(t), \theta_t\omega)-\hat{h}^\varepsilon(y_{\lambda_E}^S(t), \theta_t\omega)\big|_{\mathbb{R}^n}\notag\\
=&\Big|\frac{1}{\varepsilon}\int_{0}^{t}e^{\frac{A}{\varepsilon}(t-s)}[f(\hat{h}^\varepsilon(y_{\lambda_0}^S(s), \theta_s\omega)+\sigma\xi(\omega),y_{\lambda_0}^S(s))-f(\hat{h}^\varepsilon(y_{\lambda_E}^S(s), \theta_s\omega)+\sigma\xi(\omega),y_{\lambda_E}^S(s))]ds\Big|_{\mathbb{R}^n}\notag\\
\leq &\frac{KL_f}{\varepsilon}
\int_{0}^{t}e^{\frac{\gamma}{\varepsilon}(s-t)}\big[|y_{\lambda_0}^{S}(s)-y_{\lambda_E}^S(s)|_{\mathbb{R}^m}+|\hat{h}^\varepsilon(y_{\lambda_0}^S(t), \theta_t\omega)-\hat{h}^\varepsilon(y_{\lambda_E}^S(t), \theta_t\omega)|_{\mathbb{R}^n}\big]ds. \notag
\end{align}
Rewrite this as follows
\begin{align}
&e^{\frac{\gamma}{\varepsilon}t}|\hat{h}^\varepsilon(y_{\lambda_0}^S(t), \theta_t\omega)-\hat{h}^\varepsilon(y_{\lambda_E}^S(t), \theta_t\omega)|_{\mathbb{R}^n}\notag\\
\leq& \frac{KL_f}{\varepsilon}
\int_{0}^{t}e^{\frac{\gamma}{\varepsilon}s}|y_{\lambda_0}^{S}(s)-y_{\lambda_E}^S(s)|_{\mathbb{R}^m}ds
+\frac{KL_f}{\varepsilon}\int_{0}^{t}e^{\frac{\gamma}{\varepsilon}s}|\hat{h}^\varepsilon(y_{\lambda_0}^S(t), \theta_t\omega)-\hat{h}^\varepsilon(y_{\lambda_E}^S(t), \theta_t\omega)|_{\mathbb{R}^n}ds. \notag
\end{align}

\noindent By Gronwall's inequality \cite{Duan}, we have
\begin{align}
e^{\frac{\gamma}{\varepsilon}t}|\hat{h}^\varepsilon(y_{\lambda_0}^S(t), \theta_t\omega)-\hat{h}^\varepsilon(y_{\lambda_E}^S(t), \theta_t\omega)|_{\mathbb{R}^n}
\leq\frac{KL_f}{\varepsilon}\int_{0}^{t}e^{\frac{\gamma}{\varepsilon}s}|y_{\lambda_0}^{S}(s)-y_{\lambda_E}^S(s)|_{\mathbb{R}^m}e^{\frac{KL_f}{\varepsilon}(t-s)}ds, \notag
\end{align}
and thus 
\begin{align}
&|\hat{h}^\varepsilon(y_{\lambda_0}^S(t), \theta_t\omega)-\hat{h}^\varepsilon(y_{\lambda_E}^S(t), \theta_t\omega)|_{\mathbb{R}^n}\leq \frac{KL_f}{\varepsilon}\int_{0}^{t}|y_{\lambda_0}^{S}(s)-y_{\lambda_E}^S(s)|_{\mathbb{R}^m}e^{-\frac{\gamma-KL_f}{\varepsilon}(t-s)}ds. \notag
\end{align}
And by exchanging the order of integrals, we obtain that
\begin{align}
\int_{0}^{T}\int_{0}^{t}|y_{\lambda_0}^{S}(s)-y_{\lambda_E}^S(s)|_{\mathbb{R}^m}e^{-\frac{\gamma-KL_f}{\varepsilon}(t-s)}dsdt
&=\int_{0}^{T}\int_{s}^{T}|y_{\lambda_0}^{S}(s)-y_{\lambda_E}^S(s)|_{\mathbb{R}^m}e^{-\frac{\gamma-KL_f}{\varepsilon}(t-s)}dtds\notag\\
&< \frac{\varepsilon}{\gamma-KL_f}\int_{0}^{T}|y_{\lambda_0}^{S}(s)-y_{\lambda_E}^S(s)|_{\mathbb{R}^m}ds. \notag
\end{align}
Thus, by H\"{o}lder inequality, we obtain that
\begin{align}\label{equation-3}
\int_{0}^{T}&|\hat{h}^\varepsilon(y_{\lambda_0}^S(t), \theta_t\omega)-\hat{h}^\varepsilon(y_{\lambda_E}^S(t), \theta_t\omega)|_{\mathbb{R}^n}dt
\leq \frac{KL_f}{\varepsilon}\int_{0}^{T}\int_{s}^{T}|y_{\lambda_0}^{S}(s)-y_{\lambda_E}^S(s)|_{\mathbb{R}^m}e^{-\frac{\gamma-KL_f}{\varepsilon}(t-s)}dtds
\notag\\
&<\frac{KL_f}{\gamma-KL_f}\int_{0}^{T}|y_{\lambda_0}^{S}(s)-y_{\lambda_E}^S(s)|_{\mathbb{R}^m}ds
\leq\frac{KL_f}{\gamma-KL_f}(T^{p-1}\int_{0}^{T}|y_{\lambda_0}^{S}(t)-y_{\lambda_E}^S(t)|_{\mathbb{R}^m}^p)^{\frac{1}{p}}
\end{align}
By inserting (\ref{inequality1}) and (\ref{equation-3}) into (\ref{equation-2}), and taking expectation on two sides, and using  H\"{o}lder inequality $\mathds{E}(|XY|)\leq [\mathds{E}(|X|^p)]^{\frac{1}{p}}[\mathds{E}(|Y|^q)]^{\frac{1}{q}}$ where $p$, $q$ satisfy $\frac{1}{p}+\frac{1}{q}=1$, together with (\ref{pf2}), we have

\begin{align}\label{equation-4}
|\lambda_0-\lambda_E|\cdot H(\lambda_0,\lambda_E)&<(KT^{-\frac{1}{p}}+KL_gT^{\frac{p-1}{p}}+\frac{K^2L_gL_f}{\gamma-KL_f}T^{\frac{p-1}{p}})
\big(\mathds{E}\int_{0}^{T}|y_{\lambda_0}^{S}(t)-y_{\lambda_E}^S(t)|_{\mathbb{R}^m}^pdt\big)^{\frac{1}{p}}\notag\\
\leq&C^{\frac{1}{p}}(KT^{-\frac{1}{p}}+KL_gT^{\frac{p-1}{p}}+\frac{K^2L_gL_f}{\gamma-KL_f}T^{\frac{p-1}{p}})(F(\lambda_0)^{\frac{1}{p}}+F(\lambda_E)^{\frac{1}{p}})
\end{align}
Here, $H(\lambda_0,\lambda_E)=\mathds{E}|\int_{0}^{t^\ast}e^{-Bt}\nabla_\lambda g(\hat{h}^\varepsilon(y_{\lambda'}^S(t), \theta_t\omega)+\sigma\xi(\omega),y_{\lambda'}^S(t),\lambda')dt|_{\mathbb{R}^m}$ where $\lambda'=\lambda_0+\kappa(\lambda_E-\lambda_0)$ with $\kappa\in(0,1).$\par
Note that $\Lambda$ is a closed interval of $\mathbb{R}$ and $H(\lambda_0,\lambda_E)>0$ for $\lambda_E\in \Lambda$, together with the fact that $H(\lambda_0,\lambda_E)$ is continuous with respect to $\lambda_E$ as $\nabla_\lambda g(x,y,\lambda)$ is a continuous function of $x,y$ and $\lambda$, then there exists a $\bar{\lambda}\in \Lambda$ such that $H(\lambda_0,\bar{\lambda})=\max_{\lambda_E\in \Lambda}H(\lambda_0,\lambda_E)$. Hence by inserting (\ref{pf3}) into (\ref{equation-4}), we obtain the desired error estimation for $\lambda_E$
\begin{align}\label{equation-5}
|\lambda_0&-\lambda_E|<\frac{1}{H(\lambda_0,\bar{\lambda})}C^{\frac{1}{p}}(KT^{-\frac{1}{p}}+KL_gT^{\frac{p-1}{p}}+\frac{K^2L_gL_f}{\gamma-KL_f}T^{\frac{p-1}{p}})\notag\\
&\cdot(\frac{C_1\varepsilon}{cp}(\mathds{E}|x_0-\sigma\xi(\omega)-\hat{h}^\varepsilon(y_0,\omega)|_{\mathbb{R}^n}^p)^{\frac{1}{p}}+F(\lambda_E)^{\frac{1}{p}}).\notag
\end{align}
In addition, note that
\begin{align}
&|\tilde{h}^\varepsilon(y_{\lambda_0}^S(t), \theta_t\omega)-\tilde{h}^\varepsilon(y_{\lambda_E}^S(t), \theta_t\omega)|_{\mathbb{R}^n}\notag\\
\leqslant &|\tilde{h}^\varepsilon(y_{\lambda_0}^S(t), \theta_t\omega)-\hat{h}^\varepsilon(y_{\lambda_0}^S(t), \theta_t\omega)|_{\mathbb{R}^n}+
|\hat{h}^\varepsilon(y_{\lambda_0}^S(t), \theta_t\omega)-\hat{h}^\varepsilon(y_{\lambda_E}^S(t), \theta_t\omega)|_{\mathbb{R}^n}\notag\\
&+|\hat{h}^\varepsilon(y_{\lambda_E}^S(t), \theta_t\omega)-\tilde{h}^\varepsilon(y_{\lambda_E}^S(t), \theta_t\omega)|_{\mathbb{R}^n},\notag\\
\end{align}
Thus, we also can use $\tilde{h}^\varepsilon(y, \omega)$ instead of $\hat{h}^\varepsilon(y, \theta_t\omega)$ to get an estimator with error which is also controlled by  $\mathcal{O}((F(\lambda_E))^{\frac{1}{p}})$ and $\mathcal{O}(\varepsilon)$.
The proof is complete.\par
\end{proof}

We remark that using only observations on slow variables facilitates our method. It is often more feasible to observe slow variables than fast variables. In addition, our method reduces the computational complexity and has an important advantage in computational
 cost, since this slow system is lower dimensional than the original system. The results established here offer a benefit of dimension reduction in quantifying parameters in stochastic dynamical systems.

\renewcommand{\theequation}{\thesection.\arabic{equation}}
\setcounter{equation}{0}

\section{Numerical experiments}
\noindent
In this section, we proceed an example in $\mathbb{R}^{2}$ to verify our parameter estimation method based on random slow manifolds \cite{QZD}.\par
Consider the following fast-slow stochastic system
\begin{eqnarray}
\dot{x}&=&-\frac{1}{\varepsilon}x+\frac{1}{4\varepsilon}\cos(y)+\sigma\varepsilon^{-\frac{1}{\alpha}}\dot{L_t^\alpha},\;\;\;\; x(0)=x_0 \in \mathbb{R}, \label{Example-f-main}\\
\dot{y}&=&y+\frac{1}{4}\sin(\lambda x),\;\;\;\; y(0)=y_0 \in \mathbb{R} \label{Example-s-main},
\end{eqnarray}
where $\lambda$ is a real unknown positive parameter, $A=-1$, $B=1$, $f(x,y)=\frac{1}{4}\cos y$, $g(x,y,\lambda)=\frac{1}{4}\sin(\lambda x)$. It is easy to justify that $A$, $B$, $f$, $g$ satisfy ({\bf H1})-({\bf H2}) with $K=\gamma=\beta=1$, $L_f=L_g=\frac{1}{4}$. Thus, the proposed method of this paper is applicable.\par
By the random transformation (\ref{Transformation}) and Lemma \ref{lemma 2.1}, there exists an $\hat{h}^\varepsilon$ satisfying
\begin{equation}
\hat{h}^\varepsilon(\zeta, \omega)=\frac{1}{\varepsilon}\int_{-\infty}^{0}e^{\frac{s}{\varepsilon}}\frac{1}{4}\cos (\hat{y}(s,\omega,\zeta))ds ,\;\;\zeta\in\mathbb{R}.
\end{equation}
\noindent In fact, $\hat{h}^\varepsilon(\zeta, \omega)$ has an approximation $\tilde{h}^\varepsilon(\zeta, \omega)=\hat{h}^{(0)}(\zeta, \omega)+\varepsilon\hat{h}^{(1)}(\zeta, \omega)$ with error $\mathcal{O}(\varepsilon^2)$ by the Lemma \ref{lemma 2.3}. Here, $\hat{h}^{(0)}(\zeta, \omega)=\frac{1}{4}\cos \zeta$ and
$\hat{h}^{(1)}(\zeta, \omega)$ has an explicit expression,
$$\hat{h}^{(1)}(\zeta, \omega)=\frac{1}{4}\zeta\sin\zeta-\frac{1}{16}\sin\zeta\int_{-\infty}^{0}e^t[\int_{0}^{t}\sin(\frac{1}{4}\lambda\cos\zeta+\lambda\sigma\int_{-\infty}^{s}e^{-(s-r)}dL_r^\alpha(\omega))ds]dt.$$\par
\noindent So the approximated slow system is
\begin{equation}\label{exampleslowsystem}
\dot{y}=y+\frac{1}{4}\sin(\lambda[\tilde{h}^\varepsilon(y, \theta_t\omega)+\sigma\xi(\omega)]),
\end{equation}
with $\xi(\omega)=\int_{-\infty}^{0}e^{s}dL_s^\alpha(\omega)$.\par

In the following numerical simulations, we use a stochastic Nelder-Mead method \cite{Chang, RDC, ZCZ} to estimate unknown parameter $\lambda$ in (\ref{exampleslowsystem}). The main idea is as follows: we determine the estimated parameter value by minimizing the objective function $F(\lambda)=\mathds{E}\sum_{j=1}^{K}\sum_{i=0}^{L}|y^{i,j}_{ob}-y^{i}_{S}(\lambda)|^{p}_{\mathbb{R}^{2}}$, where $\{y^{i,j}_{ob}: y^{i,j}_{ob}=y^{i,j}_{ob}(t_i, \lambda_0), i=0,1, ..., L; j=1, 2, ..., K\}$ are $K$ different only observations of the slow component $y$ from the original system with parameter value $\lambda_0$, and observations $\{y^{i}_{S}: y^{i}_{S}=y^{i}_{S}(t_i, \lambda), i=0,1, ..., L; j=1, 2, ..., K\}$  are generated from reduced system corresponding to parameter $\lambda$. These datas are available by using Euler-Maruyama method.

As shown in fig \ref{fig1}, we see that the reduced system on random slow manifold is a good approximation of the slow variable $y$ of the original system. By just about 10 iteration in the stochastic Nelder-Mead method, we  get estimated parameter $\lambda_E\approx 0.9996$, which indicates that our estimator based on random slow manifolds is a good approximation for the true parameter value. 

\begin{figure}
 \centering
 \subfigure[]{

 \includegraphics[width=0.45\textwidth]{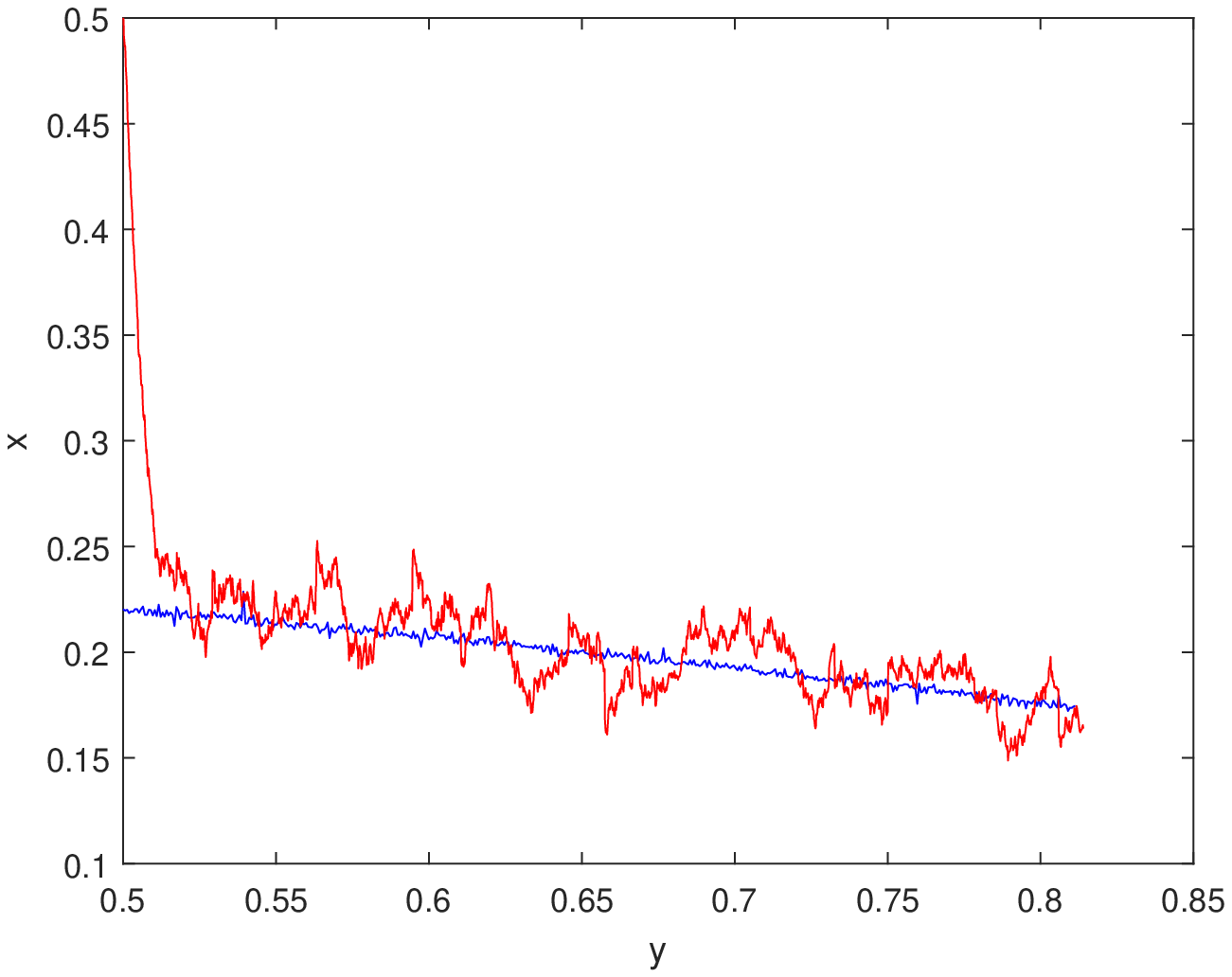}
 }
 \subfigure[]{

 \includegraphics[width=0.45\textwidth]{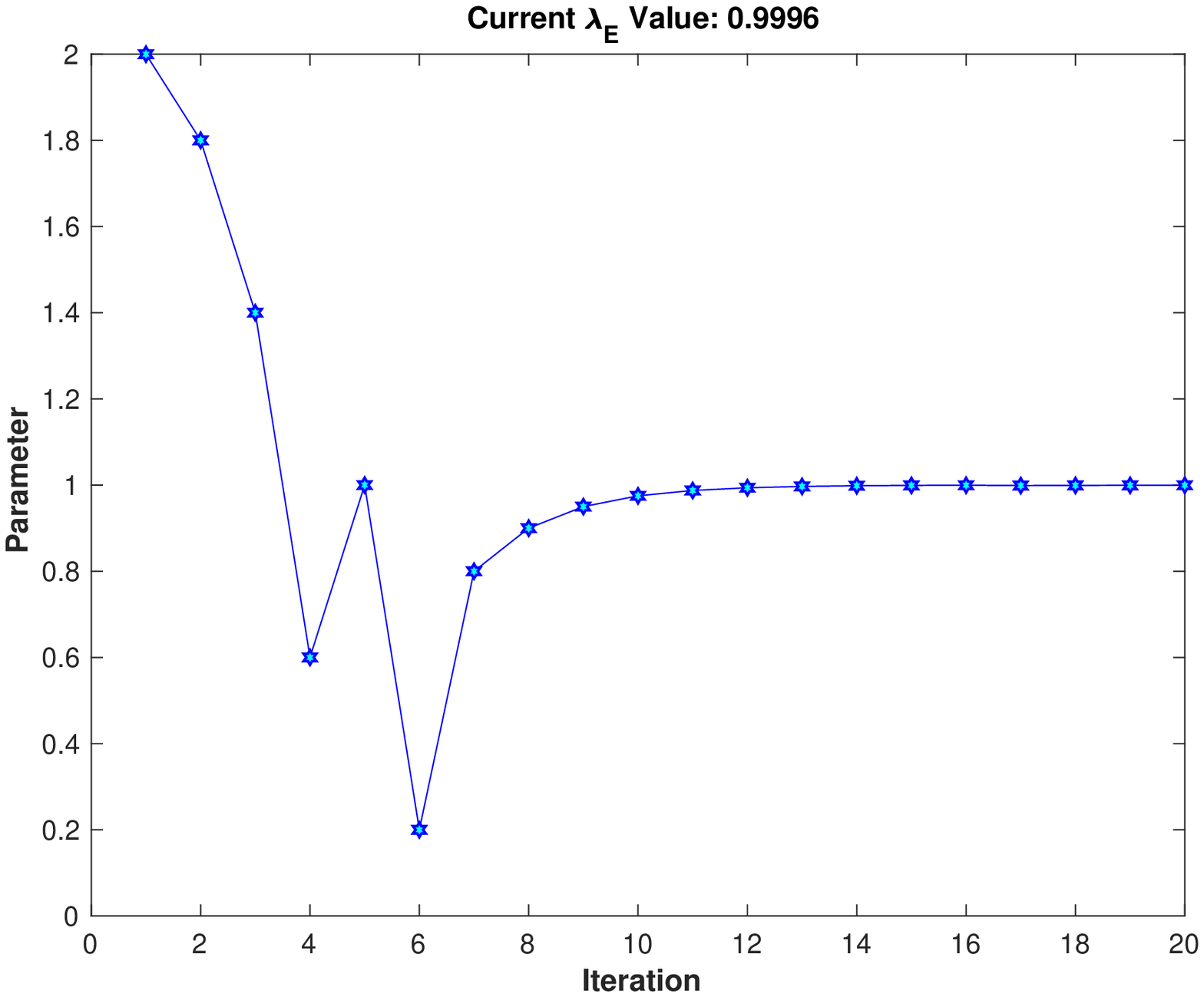}
} 
 \caption{ (Color online) Parameters: $\varepsilon=0.01$, $\alpha=1.8$, $\sigma=0.05$, $\lambda=1$. (a) One orbit in blue of the slow manifold reduced system  tracks the red orbit of system (\ref{Example-f-main})-(\ref{Example-s-main}). (b) The estimated value $\lambda_E$. The simulation result for the parameter $\lambda_E$ is 0.9996.}

 \label{fig1}
 \end{figure}
 
We remark that the parameter method established here can be used to examine complex physical or biological  dynamics, although we illustrate this point by a simple two-dimensional example here. For instance , by the same deduction as here, the unknown parameter in Shimizu-Morioka model under stochastic fluctuations could be determined, see the reference in \cite{Sh}. The parameter in this physical model can capture stochastic bifurcation behaviors \cite{HCDL}. In addition, we are often interested in parameter (see \cite{WTRY}), which represents change rate for mRNA . 
\section{Conclusions and discussion}
We developed a parameter estimation method based on a fast-slow stochastic dynamical system by using the random slow manifolds. Instead of  solving original systems, we can accurately estimate the unknown parameter only by the observation of the slow component, which offers a benefit of computational cost. The results established here can be used to examine biological dynamics, such as stochastic chemical kinetics, where we are more interested in the change rate for mRNA \cite{KJB, TGS, WTRY}.


\begin{acknowledgments}
The authors would like to thank Dr Ziying He, Dr Jianyu Hu and Dr Yanjie Zhang for helpful discussions. This work was partly supported by the NSF grant 1620449,  and NSFC grants 11531006 and 11771449.
\end{acknowledgments}

\bibliography{mybibfile}

\end{document}